\newtheorem{theorem}{Theorem}[section]
\newtheorem{lemma}[theorem]{Lemma}
\newtheorem{fact}[theorem]{Fact}
\theoremstyle{definition}
\newtheorem{definition}[theorem]{Definition}
\theoremstyle{remark}
\newtheorem{remark}[theorem]{Remark}
\numberwithin{equation}{section}
\def\bbar{\bar{b}}
\def\cbar{\bar{c}}
\def\dbar{\bar{d}}
\def\hbar{\bar{h}}
\def\mbar{\bar{m}}
\def\xbar{\bar{x}}
\def\ybar{\bar{y}}
\def\zbar{\bar{z}}
\def\QF{{\rm QF}}
\def\Fm{{\rm FO}}
    \def\rtp{{\rm rtp}}
    \newcommand{\tp}{\mathrm{tp}}
    \newcommand\acl{\hbox{\rm acl}}
    \def\cc{{\mathbf c}}
    \def\dd{{\mathbf d}}
    \renewcommand\>{\rangle}
    \def\Ind#1#2{#1\setbox0=\hbox{$#1x$}\kern\wd0\hbox to 0pt{\hss$#1\mid$\hss}
    	\lower.9\ht0\hbox to 0pt{\hss$#1\smile$\hss}\kern\wd0}
     \def\ind{\mathop{\mathpalette\Ind{}}}
     \newcommand{\Ra}{\Rightarrow}
     \newcommand{\bs}{\backslash}
\begin{document}

% \title[short text for running head]{full title}
\title[Theories with few non-algebraic types over models]	{Theories with few non-algebraic types over models, and their decompositions}

%    Only \author and \address are required; other information is
%    optional.  Remove any unused author tags.

%    author one information
% \author[short version for running head]{name for top of paper}
\author{Samuel Braunfeld}
\address{Charles University, Faculty of Mathematics and Physics, Computer Science Institute, Praha, Czechia 11800}

%    author two information
\author[Michael C. Laskowski]{Michael C. Laskowski$^*$}
\address{University of Maryland College Park, Deparatment of Mathematics, College Park, MD 20742}
\thanks{$^*$Partially supported
	by NSF grant DMS-1855789}

%    \subjclass is required.
\subjclass[2020]{Primary 03C45}

\date{}

\dedicatory{}

%    "Communicated by" -- provide editor's name; required.
\commby{Vera Fischer}

%    Abstract is required.
\begin{abstract}
	 We consider several ways of decomposing models into parts of bounded size forming a congruence over a base, and show that admitting any such decomposition is equivalent to mutual algebraicity at the level of theories. We also show that a theory $T$ is mutually algebraic if and only if there is a uniform bound on the number of coordinate-wise non-algebraic types over every model, regardless of its cardinality.
\end{abstract}

\maketitle

\section{Introduction}
A key theme in model theory is to identify which theories have models that admit a structure theory, in the sense that their models can be decomposed into simple pieces that relate to each other in a controlled way. Intertwined with this is the theme of determining the complexity of theories by counting the number of types over models. The archetypal example of a structure theory is for classifiable theories, whose models are determined by a well-founded tree of countable elementary substructures \cite{Bus, HHL}. Integral to the analysis of classifiable theories are the properties of stability and superstability, both initially defined by type-counting. 

Here we investigate a family of much stronger decompositions for models, and show that they are all equivalent at the level of theories. In particular, for each type of decomposition, the property that all models of $T$ admit such a decomposition is equivalent to $T$ being {\em mutually algebraic}. (Mutual algebraicity is a condition generalizing bounded-degree graphs, and already has several characterizations \cite{JSL}. For this note, all we need is contained in Facts \ref{fact:exp} and \ref{fact: 4.4}.) Type-counting plays a fundamental role in the proof, and we almost simultaneously obtain a characterization of mutual algebraicity in terms of a very strong type-counting condition.

Stability in a cardinal $\kappa$ is defined by there being only $\kappa$ consistent types over every model $M$ of size $\kappa$. This is as low as possible since for every $m \in M$, there exists the algebraic type containing $x=m$. But by only considering types $p(\xbar)$ that are {\em coordinate-wise non-algebraic} over $M$, i.e. with no variable in $\xbar$ set equal to an element of $M$, we may do better for some theories. Following \cite[Corollary 6.1.8]{BS}, we call a theory {\em bounded} if there is a uniform bound on the number of coordinate-wise non-algebraic types over every model, regardless of its size. We show a theory is bounded if and only if it is mutually algebraic, and in fact the bound on the number of coordinate-wise non-algebraic types is $2^{|T|}$.

Our decomposition conditions are more involved, but they are also rooted in ideas from \cite{BS}.  There, it is shown for a monadically stable theory $T$, i.e. every expansion of $T$ by unary predicates remains stable, every model of $T$ admits a decomposition into an independent tree of countable elementary submodels, as in the classifiable case but without any need to complete to prime models. Our notion of decomposition (in particular, what we call a $(|T|,\QF)$-model decomposition) corresponds to such a tree-decomposition of depth one.

Thus a corollary of our result on decompositions is that the monadically stable theories of depth one are the same as the mutually algebraic theories. This generalizes the corresponding result for the $\omega$-categorical case, which follows from the classification of $\omega$-categorical monadically stable theories in \cite{Lach} and the characterization of $\omega$-categorical mutually algebraic theories in \cite{cell}. 

\section{Preliminaries}

Rather than work with the coordinate-wise non-algebraic types from the introduction,  we recall a notion of complexity $\rtp_\Delta(N,B)$ that was used by the authors in \cite{MonNIP}, which counts the number of $\Delta$-types over $B$ that are realized in $(N-B)^{<\omega}$. When $B$ is a model and $N$ is $|B|^+$-saturated, then $\rtp_\Delta(N,B)$ counts the number of consistent coordinate-wise non-algebraic $\Delta$-types over $B$.

\begin{definition} \label{def:reas} For a fixed language $L$, a set $\Delta$
	of $L$-formulas is {\em reasonable} if it contain all quantifier free formulas and is closed under permutation of variables and boolean combinations.  Examples include the set of quantifier-free formulas $\QF$, boolean combinations of $\Sigma_n$, or the set of all $L$-formulas $\Fm$.

	For an $L$-structure $N$ and a subset $B\subseteq N$, and $\cbar\in (N-B)^k$, let
	$$\tp_\Delta(\cbar/B)=\{\phi(\xbar,\bbar):\phi(\zbar)\in\Delta, \xbar\ybar \text{ a partition of $\zbar$}, \bbar\in B^{\lg(\ybar)}, N\models\phi(\cbar,\bbar)\}$$
	and let $\rtp_{\Delta}(N,B)$ denote the number of $\Delta$-types over $B$ realized in $(N-B)^{<\omega}$.
	When $\Delta=\Fm$, we simply write $\rtp(N,B)$.
\end{definition}

We record the following facts about $\rtp(N,B)$.

\begin{fact} \label{rtpfacts}   Let $B\subseteq N$ be arbitrary.
	\begin{enumerate}
		\item If $\QF\subseteq\Delta\subseteq\Fm$, then $\rtp_{\QF}(N,B)\le \rtp_{\Delta}(N,B)\le \rtp_{\Fm}(N,B)$;
		\item  $\rtp(N,B)\le \beth_{\omega+1}(\rtp_{\QF}(N,B))$.
		\item  Let $L^+$ be an expansion of $L$ by finitely many unary predicates, and $N^+$ a corresponding expansion of $N$. Then
		 $\rtp_{L^+}(N^+,B)\le\beth_{\omega+1}(\rtp_{\QF}(N,B))$.
	\end{enumerate}
\end{fact}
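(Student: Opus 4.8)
The plan is to treat the three parts separately, with part (2) carrying all the content and parts (1) and (3) being short reductions. For part (1), I would simply invoke monotonicity of the functional under enlarging $\Delta$. If $\Delta\subseteq\Delta'$, then restricting a $\Delta'$-type over $B$ to its $\Delta$-formulas gives a surjection from the $\Delta'$-types realized in $(N-B)^{<\omega}$ onto the $\Delta$-types realized there, so $\rtp_\Delta(N,B)\le\rtp_{\Delta'}(N,B)$. Applying this to $\QF\subseteq\Delta\subseteq\Fm$ yields the chain at once.

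For part (2), write $\kappa=\rtp_{\QF}(N,B)$ and name every element of $B$ by a constant, passing to the expansion $N_B$; then $\rtp_{\QF}(N,B)$ and $\rtp(N,B)$ count, respectively, the quantifier-free and the complete $L_B$-types realized by tuples from $P:=N-B$. The idea is to control complete types by a rank-stratified back-and-forth carried out \emph{inside} $P$. Define $\cbar\sim_0\cbar'$ to mean $\cbar,\cbar'$ (of equal length) have the same quantifier-free type over $B$, and $\cbar\sim_{n+1}\cbar'$ to mean that for every $c\in P$ there is $c'\in P$ with $\cbar c\sim_n\cbar' c'$, and symmetrically. Letting $a_{n,k}$ be the number of $\sim_n$-classes of $k$-tuples from $P$, the definitions give $a_{0,k}\le\kappa$ and $a_{n+1,k}\le 2^{a_{n,k+1}}$, since the $\sim_{n+1}$-class of $\cbar$ is determined by the set $\{[\cbar c]_{\sim_n}:c\in P\}$; hence $a_{n,k}\le\beth_n(\kappa)$ by induction on $n$.

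The crux is to show that the complete type $\tp(\cbar/B)$ of $\cbar\in P^k$ is determined by the sequence $([\cbar]_{\sim_n})_{n<\omega}$. The main obstacle—and the reason the bound is $\beth_{\omega+1}$ rather than something smaller—is that ordinary quantifiers in $L_B$ range over all of $N$, whereas the game $\sim_n$ only moves inside $P$, so I must check that quantification over $B$ adds nothing uncontrolled. This is handled by the relativization identity $\exists y\,\psi \equiv \exists y(P(y)\wedge\psi)\ \vee\ \bigvee_{b\in B}\psi(\cdot,b)$: quantification over the complement of $P$ is an infinitary disjunction over the \emph{named} parameters $b\in B$, each summand of lower quantifier rank. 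Inducting on formulas, every $L_B$-formula is, on tuples from $P$, equivalent to an infinitary Boolean combination of $P$-relativized $L_B$-formulas, so the complete type over $B$ is determined by the $P$-relativized type; and the easy (induction-on-formulas) direction of Ehrenfeucht--Fra\"iss\'e shows that $\cbar\sim_n\cbar'$ implies agreement on all $P$-relativized $L_B$-formulas of quantifier rank $\le n$. Thus $\bigcap_n\sim_n$ forces equality of complete types over $B$, so the number of complete $k$-types is at most the number of sequences $([\cbar]_{\sim_n})_n$, which is $\prod_{n<\omega}\beth_n(\kappa)\le\beth_\omega(\kappa)^{\aleph_0}\le 2^{\beth_\omega(\kappa)}=\beth_{\omega+1}(\kappa)$ (using $\beth_\omega(\kappa)\ge\aleph_0$); summing over the $\aleph_0$ arities leaves the bound unchanged.

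For part (3), I would reduce to part (2) applied to $N^+$. Adding finitely many unary predicates $U_1,\dots,U_m$ splits a quantifier-free $L$-type of a $k$-tuple into at most $2^{mk}$ quantifier-free $L^+$-types, according to the membership pattern of the coordinates in the $U_i$. Consequently $\rtp_{\QF}(N^+,B)$ is finite exactly when $\kappa=\rtp_{\QF}(N,B)$ is finite (only finitely many arities then contribute), while if $\kappa$ is infinite then $\kappa\le\rtp_{\QF}(N^+,B)\le\aleph_0\cdot\kappa=\kappa$, so the two agree. Since $\beth_{\omega+1}$ takes a single value on all finite arguments, in both cases $\beth_{\omega+1}(\rtp_{\QF}(N^+,B))=\beth_{\omega+1}(\kappa)$. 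Applying part (2) to $N^+$ then gives $\rtp_{L^+}(N^+,B)=\rtp(N^+,B)\le\beth_{\omega+1}(\rtp_{\QF}(N^+,B))=\beth_{\omega+1}(\kappa)$, as required.
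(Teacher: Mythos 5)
Your route is necessarily different from the paper's, because the paper does not really prove parts (2) and (3): part (1) is dispatched exactly as you do (restriction gives a surjection from realized $\Delta'$-types onto realized $\Delta$-types), while (2) and (3) are quoted from Lemma 4.6 and the proof of Lemma 4.7 of \cite{MonNIP}. So you are supplying a self-contained argument where the paper cites. Your part (3) reduction is correct: a quantifier-free $L^+$-type over $B$ of a tuple from $N-B$ is determined by its quantifier-free $L$-type together with the $U_i$-membership pattern of its coordinates, so $\rtp_{\QF}(N^+,B)$ and $\rtp_{\QF}(N,B)$ have the same $\beth_{\omega+1}$-value, and (2) applied to $N^+$ finishes. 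The counting in part (2) is also fine: $a_{0,k}\le\kappa$, $a_{n+1,k}\le 2^{a_{n,k+1}}$ (the $\sim_{n+1}$-class of $\bar{c}$ is determined by $\{[\bar{c}c]_{\sim_n}:c\in P\}$), hence $a_{n,k}\le\beth_n(\kappa)$, and at most $\beth_{\omega+1}(\kappa)$ sequences of classes in all.

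However, the bridging step of part (2) has a genuine gap as written. You claim, by induction on formulas, that every $L_B$-formula is, on tuples from $P$, equivalent to an infinitary Boolean combination of $P$-relativized \emph{finitary} $L_B$-formulas. That induction does not close at the quantifier step: after applying the inductive hypothesis inside $\exists y(P(y)\wedge-)$ you obtain a relativized quantifier applied to an infinitary Boolean combination, and $\exists$ does not commute with infinite conjunctions, so the result is not again a top-level Boolean combination of finitary relativized formulas. This is not merely cosmetic, because $L_B$ has infinitely many constants, and agreement on all finitary $P$-relativized formulas of quantifier rank $\le n$ can be strictly coarser than $\sim_n$; so one cannot simply combine the easy EF direction for the finitary fragment with a determination statement for that same fragment. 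Two local repairs. (a) Strengthen the induction to: every $L_B$-formula of quantifier rank $r$ is, on $P$-tuples, equivalent to a $P$-relativized formula of $L_{\infty,\omega}(L_B)$ of quantifier rank $\le r$, with infinitary connectives allowed at all depths; the rank-$\le n$ relativized $L_{\infty,\omega}$ fragment is still controlled by $\sim_n$ (Karp-style back-and-forth), so your conclusion follows. (b) More simply, drop relativization altogether and prove directly by induction on $n$ that $\bar{c}\sim_n\bar{c}'$ implies $N\models\varphi(\bar{c})\leftrightarrow\varphi(\bar{c}')$ for every $L_B$-formula $\varphi$ of quantifier rank $\le n$: for $\varphi=\exists y\,\psi$ of rank $\le n+1$, split on whether a witness lies in $P$ (use the game move and the inductive hypothesis for $\psi$) or in $B$ (substitute the constant naming the witness, which does not raise the rank, and use $\sim_{n+1}\subseteq\sim_n$). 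With either repair, your proof is complete and gives exactly the stated bound.
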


\begin{proof}  (1) is immediate as for any $\cbar,\dbar\in (N-B)^k$, $\tp(\cbar/B)=\tp(\dbar/B)$ implies $\tp_{\Delta}(\cbar/B)=\tp_{\Delta}(\dbar/B)$ implies
	$\tp_{\QF}(\cbar/B)=\tp_{\QF}(\dbar/B)$.
	
	(2)  This is Lemma 4.6 of \cite{MonNIP}.
	
	(3)  This follows from the proof of Lemma 4.7 of \cite{MonNIP}.
\end{proof}

We now state the two facts we will need about mutually algebraic theories, the first a non-structure theorem and the second a structure theorem.

\begin{fact} [{\cite[Theorem 3.2]{Expansions}}] \label{fact:exp}
	Suppose $T$ is not mutually algebraic. Then there is some expansion $T^+$ of $T$ by finitely many unary predicates and a model $N^+ \models T^+$ with a definable $X \subset N^+$ and definable $E \subset X^2$ such that $E$ is an equivalence relation with infinitely many classes, each infinite.
\end{fact}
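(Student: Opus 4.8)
This is a non-structure theorem, so I would argue directly from a witness to the failure of mutual algebraicity rather than by contraposition. The key conceptual point I would lean on is that a definable equivalence relation with infinitely many infinite classes is essentially the \emph{canonical} obstruction to mutual algebraicity: such an $E(x,y)$ is not mutually algebraic, since fixing $x$ leaves an infinite $E$-class as its solution set, and it resists being written as a boolean combination of bounded-degree (mutually algebraic) relations together with unary sets. From this viewpoint the real content of the statement is that the \emph{only} way mutual algebraicity can fail is by the presence of this canonical object, visible after naming finitely many sets. Accordingly, my plan is: extract from the failure of mutual algebraicity a genuinely two-dimensional definable configuration; homogenize it into a clean array of pairwise disjoint infinite fibers; and then name the relevant pieces by finitely many unary predicates so that ``lying in a common fiber'' becomes a definable equivalence relation with the required properties.

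First I would use the developed theory of mutually algebraic structures (\cite{JSL}) to turn the abstract hypothesis into combinatorics. Since $T$ is not mutually algebraic, some formula is not a boolean combination of mutually algebraic formulas; the standard manifestation is a formula $\phi(\xbar,\ybar)$ whose fibers $\phi(M,\bbar)$ are infinite for infinitely many parameters $\bbar$ and cannot be reduced to bounded-degree behavior plus unary predicates. Working in a sufficiently saturated $M\models T$, I would then homogenize: by compactness the index set may be taken to be any infinite order, and by Ramsey (passing to an indiscernible array) I can make the configuration as generic as possible. The target of this step is elements $(\cbar_{i,j})_{i,j<\omega}$ and parameters $(\bbar_i)_{i<\omega}$ with $M\models\phi(\cbar_{i,j},\bbar_k)$ iff $i=k$, so that each $\bbar_i$ controls the infinite set $\{\cbar_{i,j}:j<\omega\}$ and distinct indices control disjoint sets. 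This is already an equivalence-relation pattern; what remains is to make it honestly definable and honestly transitive.

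Next I would pass to the expansion $N^+$. Using finitely many unary predicates I would name the set $C$ of coordinates occurring in the $\cbar_{i,j}$, the set $B$ of coordinates occurring in the $\bbar_i$, and predicates distinguishing the coordinate positions inside the tuples; these last predicates are what let me collapse the tuple-array to a relation on single elements, so that the desired $X\subset N^+$ and $E\subset X^2$ live on singletons. Setting $X:=C$, I would define
$$E(z,z')\ :\equiv\ \exists\wbar\in B\,\big(\phi(z,\wbar)\wedge\phi(z',\wbar)\big).$$
Reflexivity and symmetry are immediate, and transitivity is exactly where the unary marking pays off: restricting the witness $\wbar$ to the named set $B$ forces each element of $C$ to be $\phi$-related to a unique index, so ``sharing a witness in $B$'' is genuinely an equivalence. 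Its classes are the controlled sets $\{\cbar_{i,j}:j\}$, of which there are infinitely many, each infinite.

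The step I expect to be the main obstacle is the interface between the combinatorics and definability, in two respects. The first is extracting \emph{disjoint infinite} fibers from the bare failure of mutual algebraicity: a mere bijective matching $\phi(\abar_i,\bbar_j)\iff i=j$ is itself mutually algebraic, so I must ensure the configuration is genuinely two-dimensional (unbounded fibers that cross-cut, not a bounded-degree relation), which is precisely where the combinatorial characterization from \cite{JSL} and a careful homogenization must do the work. The second is rigidity: if some $\cbar$ could be controlled by two distinct indices, then $E$ as defined would merge classes and fail transitivity, so the homogenization and the choice of which sets to name must be arranged to guarantee a unique $B$-witness per element of $C$. The reduction from tuples to single elements is a secondary, more bookkeeping-heavy point, handled by the coordinate-position predicates together with existential quantification to recover full tuples from their named coordinates.
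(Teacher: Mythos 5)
The paper itself offers no proof of this statement---it is imported verbatim from \cite[Theorem 3.2]{Expansions}---so your proposal can only be judged on its own merits and against that external source. Your overall architecture (extract a pairwise-disjoint, exclusive array with $\models\phi(\cbar_{i,j},\bbar_k)$ if and only if $i=k$, with infinitely many infinite rows; name finitely many unary predicates; define $E$ as ``sharing a witness in $B$'') is the right shape and close in spirit to the actual argument. But the two issues you flag as ``main obstacles'' are not loose ends: they are the entire mathematical content of the theorem, and your proposal defers them rather than resolves them. Worse, the concrete recipe you give for $E$ is refuted by an explicit example, even granting a perfectly homogeneous exclusive array.

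Take the universe $C\sqcup B$ with $C=\{c_{i,j}:i,j<\omega\}$, $B=\{b_i^1,b_i^2:i<\omega\}$, and a single ternary relation $S(x,y,y')$ holding exactly when $x\in C$, $y,y'\in B$, and the row index of $x$ equals that of $y$ or of $y'$. This structure is not mutually algebraic: the definable relation $S(x,y,y)$ is a bipartite ``same row'' relation with infinitely many pairwise disjoint infinite fibers, and if it were a boolean combination of mutually algebraic formulas, each fiber would agree up to finite difference with a set in the finite boolean algebra generated by the unary formulas occurring in that combination, which is impossible for infinitely many disjoint infinite fibers. With $\bbar_k=(b_k^1,b_k^2)$ this structure realizes exactly your target configuration: $S(c_{i,j},\bbar_k)$ holds if and only if $i=k$, all tuples are pairwise disjoint, and everything is as indiscernible as one likes. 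Now run your construction: name $C$, $B$, and the two coordinate positions $B^1,B^2$, and put $E(z,z'):\equiv\exists w\,\exists w'\,(w\in B^1\wedge w'\in B^2\wedge S(z,w,w')\wedge S(z',w,w'))$. Then $E$ is the \emph{total} relation on $C$: any $z$ in row $1$ and $z'$ in row $2$ share the mixed witness $(b_1^1,b_2^2)$, so there is one class, not infinitely many. The failure is structural: once $\lg(\ybar)\ge 2$, finitely many unary predicates cannot express ``$\wbar$ is one of the genuine blocks $\bbar_k$,'' and position-correct tuples mixing two blocks may have fibers meeting several rows; neither the array conditions nor indiscernibility forbids this, so uniqueness of witnesses cannot be ``arranged by homogenization and the choice of which sets to name,'' as you claim. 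What is missing is a minimization over the witnessing formula and over the partition/arity of its parameter variables, guaranteeing that no proper sub-tuple of $\ybar$ already controls an infinite part of a row; in the example this minimization replaces $S(x,y,y')$ by its diagonal $S(x,y,y)$, for which your $E$ does work. Your first obstacle is likewise named but not solved: extracting the exclusive two-dimensional array from bare non-mutual-algebraicity is itself a theorem (compare Theorem~6.1 of \cite{LT1}, invoked in the remark following Lemma~\ref{lemma:unbnd}), since Ramsey plus compactness may just as well return the degenerate pattern in which $\phi(\cbar_{i,j},\bbar_k)$ holds for all $i,k$, and that pattern carries no non-mutual-algebraicity content at all.
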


\begin{fact} [{\cite[Propositions 4.2, 4.4]{JSL}}] \label{fact: 4.4}
	Suppose $T$ is mutually algebraic, and $M \preceq N \models T$. Then $N - M$ is partitioned into components $\set{C_i : i \in I}$ forming a forking-independent set over $M$, and such that each $C_i = \acl(c_i)\setminus M$ for any $c_i \in C_i$ and $M\cup C_i\preceq N$.
\end{fact}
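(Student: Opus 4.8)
The plan is to construct the components directly from algebraic closure and to draw every nontrivial property from the absence of a single configuration. Recall that mutual algebraicity is preserved under finite expansions by unary predicates, and that a mutually algebraic structure admits no definable equivalence relation with infinitely many infinite classes (the elementary converse to Fact~\ref{fact:exp}). Consequently no finite unary expansion of $N$ can carry such a relation, and this is the engine I would use throughout: I would name convenient subsets of $N$ (notably $M$ and the sets $\acl(c_i)$) by fresh unary predicates, and whenever one of the desired conclusions fails I would assemble the failure into a definable equivalence relation with infinitely many infinite classes in such an expansion, a contradiction.

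For the partition, parts (1) and (3), the key lemma is that algebraic closure is symmetric off $M$: for $a,b\in N\setminus M$ one has $b\in\acl(a)$ iff $a\in\acl(b)$. Here $a,b\notin M$ guarantees $a,b\notin\acl(\emptyset)$, since $\acl(\emptyset)\subseteq\acl(M)=M$, which rules out the degenerate asymmetries. To prove the lemma I would suppose $b\in\acl(a)$ while $a\notin\acl(b)$, uniformize this asymmetry over the $\emptyset$-definable set of realizations of $\tp(a)$, and read off a relation with bounded fibres in one coordinate that is unbounded in the other; naming the relevant set by a predicate converts this into the forbidden configuration. Granting the lemma, the relation $aEb\iff b\in\acl(a)$ on $N\setminus M$ is reflexive, symmetric by the lemma, and transitive because $b\in\acl(a)$ forces $\acl(b)\subseteq\acl(a)$. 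Its classes partition $N\setminus M$, and the class of $c$ is exactly $\acl(c)\setminus M$, giving (1) and (3).

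For part (4), $M\cup C_i\preceq N$, I would use the Tarski--Vaught test. Given $\bar d=\bar m\bar e\in M\cup C_i$ with $\bar m\subseteq M$ and $\bar e\subseteq C_i$, and a witness in $N$ to $\exists y\,\varphi(y,\bar m,\bar e)$, I must produce one in $M\cup C_i$. I would split according to whether the instance is controlled by $\bar m$ alone or genuinely uses $\bar e$. If the solution set is, up to its finite algebraic part, already defined over $M$, then $M\preceq N$ supplies a witness in $M$. Otherwise a witness lies in $\acl(\bar m\bar e)\setminus M$, and the point is that such a witness must lie in $C_i$ rather than leak into another component through the parameters $\bar m$; were this to fail, the resulting link between $C_i$ and a third component, spread over the $\emptyset$-definable family of parameter-tuples realizing $\tp(\bar m\bar e)$, would again produce an equivalence relation with infinitely many infinite classes in a unary expansion. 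Controlling both the descent of the non-algebraic part to $M$ and the confinement of the algebraic part to $C_i$ is the combinatorial heart of the argument, and I expect this to be the main obstacle.

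Finally, part (2), forking-independence over $M$. Mutually algebraic theories are stable, and their forking is trivial in the sense that independence over a model is governed by algebraic closure; it therefore suffices to verify that the components are pairwise algebraically free over $M$, namely $\acl(MC_i)\cap\acl(MC_j)\subseteq M$ for $i\neq j$, and that pairwise independence propagates to the full family. A shared element outside $M$ would, exactly as above, manufacture the forbidden configuration in a unary expansion naming $M$, $\acl(c_i)$ and $\acl(c_j)$; and triviality of forking upgrades the pairwise statement to independence of $\set{C_i : i\in I}$ over $M$, for instance by checking that each $\tp(C_i/M\cup\bigcup_{j\neq i}C_j)$ is finitely satisfiable in $M$.
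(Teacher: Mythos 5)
First, a caveat: the paper never proves this statement; it is imported verbatim as a Fact from \cite[Propositions 4.2, 4.4]{JSL}, so your attempt can only be measured against the development there. Your skeleton is actually quite close to it: in \cite{JSL} the components really are the classes of the relation $b\in\acl(a)$ on $N\setminus M$, symmetry of $\acl$ off $\acl(\emptyset)$ really is a key lemma, elementarity of $M\cup C_i$ is a Tarski--Vaught argument, and independence comes from finite satisfiability. The difference is the tool used to prove these things. The reference works directly from the structural characterization of mutual algebraicity --- every formula is equivalent to a boolean combination of mutually algebraic formulas --- from which symmetry and, crucially, the \emph{locality} of algebraic closure, $\acl(A)=\acl(\emptyset)\cup\bigcup_{a\in A}\acl(a)$, fall out by a short disjunctive-normal-form analysis. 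You instead propose to derive everything by contradiction from ``no unary expansion carries a definable equivalence relation with infinitely many infinite classes.''

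That choice of engine is where the genuine gap lies, in two respects. First, by Fact~\ref{fact:exp} that forbidden-configuration property is \emph{equivalent} to mutual algebraicity, so invoking it in contrapositive form just says ``otherwise $T$ is not mutually algebraic''; all the mathematical content sits in the conversions you never carry out, and these are not routine. For example, to turn $b\in\acl(a)$, $a\notin\acl(b)$ into the forbidden configuration you need a saturated extension to produce infinitely many targets with infinite fibres, and then a Ramsey/sunflower-type disjointification (each $x$ lies in at most $k$ fibres, but fibres can interlock so that a naive greedy choice of a target set $B^*$ leaves no target with infinitely many ``private'' elements); making this work is a page of infinite combinatorics, essentially the territory of Theorem 6.1 of \cite{LT1}, not a remark. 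The same applies to the reductions you gesture at in parts (2) and (4). Second, a load-bearing ingredient is missing outright: symmetry of $\acl$ does \emph{not} imply locality, yet your part (4) needs it --- without $\acl(\bar m\bar e)=\acl(\emptyset)\cup\bigcup_i\acl(m_i)\cup\bigcup_j\acl(e_j)$, a witness algebraic over $\bar m\bar e$ jointly need not be algebraic over any single coordinate, so your dichotomy (``solution set defined over $M$ up to a finite part'' versus ``witness in $\acl(\bar m\bar e)\setminus M$'') is not exhaustive and the witness can indeed ``leak.'' You correctly flag this as the main obstacle, but offer only the same unexecuted reduction. Finally, in part (2) you cite as known that forking in mutually algebraic theories is trivial and governed by $\acl$; that is essentially the independence clause of the very Fact being proved, so it must either be proved (again via the structure theory) or cited independently --- as stated it is circular.
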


From each fact we prove a corresponding lemma, which together will quickly yield our main results.

\begin{lemma} \label{lemma:unbnd}
	Suppose $T$ is a non-mutually algebraic $L$-theory, and let $\Delta$ be a reasonable set of $L$-formulas.  Then for every cardinal $\mu$, there is a cardinal $\lambda > \mu$ and models $M \prec N \models T$ with $|M| = \lambda$ and $|N| = \lambda^+$ such that for every intermediate set $M \subseteq Y \subset N$ with $|Y| = \lambda$, we have $\rtp_\Delta(N, Y) \geq \mu$.
\end{lemma}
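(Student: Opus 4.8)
The plan is to leverage the non-structure Fact \ref{fact:exp}, which hands us an expansion $T^+$ of $T$ by finitely many unary predicates, a model $N^+ \models T^+$, and definable sets $X$ and $E \subset X^2$ with $E$ an equivalence relation having infinitely many infinite classes. Since adding finitely many unary predicates does not affect $\rtp_{\QF}$ up to the $\beth_{\omega+1}$-bounds of Fact \ref{rtpfacts}(3), I would first argue that it suffices to produce a lower bound on $\rtp$ in the expanded language; any lower bound $\mu$ we want in $\rtp_\Delta(N,Y)$ for the original theory can be arranged by starting from a target large enough to absorb the $\beth_{\omega+1}$ loss, and using Fact \ref{rtpfacts}(1) to pass between $\Delta$ and $\QF$. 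So the real work is to build, in a structure carrying such a definable $E$ with many large classes, a pair $M \prec N$ of the stated cardinalities in which every intermediate $Y$ of size $\lambda$ still sees $\geq \mu$ distinct $\QF$-types over it realized outside $Y$.

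Next I would construct $N$ by forcing the equivalence relation $E$ to have both many classes and large classes, via an elementary extension (or a suitable Ehrenfeucht--Mostowski / compactness argument) so that $N$ contains $\lambda^+$ many $E$-classes each of size $\lambda^+$, and take $M$ to be an elementary submodel of size $\lambda$. The key combinatorial idea is that distinct $E$-classes are "orthogonal" for counting purposes: an element $c$ in a class disjoint from $Y$ together with a chosen tuple $\bbar$ of class-representatives from $Y$ yields a $\QF$-type recording, for each representative $b_i \in Y$, whether $E(c,b_i)$ holds. The plan is to select $\lambda$-many (or as many as needed) representatives $b_i \in Y$ lying in distinct $E$-classes, and then for each of the $\geq \mu$ remaining classes that miss $Y$ entirely, pick a witness $c$; the resulting types $\tp_\Delta(c/Y)$ are pairwise distinct because $c$ is $E$-equivalent to some distinguished marker but $E$-inequivalent to all the $b_i$, and the pattern of $E$-relations to a further distinguishing coordinate separates them.

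The crucial point forcing the counting to go through for \emph{every} intermediate $Y$ with $|Y| = \lambda$ is a counting/pigeonhole step: since $|Y| = \lambda$ but $N$ has $\lambda^+$ many $E$-classes each of size $\lambda^+$, no matter how $Y$ is chosen there must remain at least $\lambda^+ > \mu$ classes that are either disjoint from $Y$ or have an element outside $Y$, and moreover $Y$ can meet at most $\lambda$ classes, so at least $\lambda^+$ classes are untouched. Each such untouched class contributes a fresh coordinate-wise non-algebraic element whose $E$-pattern to a fixed transversal inside $Y$ is distinct, giving the $\geq \mu$ distinct $\Delta$-types regardless of $Y$. I would formalize "distinct type" by choosing inside each untouched class an element, and inside $Y$ a set of $\mu$-many representatives of distinct $E$-classes that $Y$ \emph{does} meet — using that $Y \supseteq M$ and $M$ already meets many classes — so that membership in each of these classes is a distinguishing $\QF$-formula.

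The main obstacle I anticipate is the uniformity over \emph{all} intermediate $Y$: a cleverly chosen $Y$ might try to absorb precisely the representatives needed to distinguish types, so I must ensure the supply of large $E$-classes so dominates $|Y| = \lambda$ that the pigeonhole leaves enough both of "distinguishing" classes meeting $Y$ and of "witness" classes missing $Y$. Balancing these two demands — needing many classes inside $Y$ to form the separating transversal, yet many classes outside $Y$ to realize fresh types — is the delicate part, and it is exactly why the cardinal arithmetic $|M|=\lambda$, $|N|=\lambda^+$ with $\lambda > \mu$ is chosen: it guarantees a $\lambda^+$-sized surplus of classes that survives the removal of any $\lambda$-sized $Y$, from which both the transversal and the witnesses can be extracted.
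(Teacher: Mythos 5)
Your high-level skeleton matches the paper's: invoke Fact \ref{fact:exp}, blow up $E$ in an elementary extension, choose $\lambda$ large enough to absorb the $\beth_{\omega+1}$ loss from Fact \ref{rtpfacts}, and transfer the count back to the $L$-reduct. But the core counting mechanism you propose is backwards, and this is a genuine gap. You take your witnesses $c$ from $E$-classes that are \emph{disjoint} from $Y$ and claim their types over $Y$ are separated by their $E$-pattern to representatives $b_i \in Y$. If the class of $c$ misses $Y$, then $\neg E(c,b)$ holds for \emph{every} $b \in Y$: all such witnesses realize the same all-negative $E$-pattern over $Y$, so untouched classes contribute essentially one type per arity, not $\mu$ many. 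The ``distinguished marker'' that $c$ is $E$-equivalent to lies outside $Y$, and a type over $Y$ cannot see it; nor can you count on any structure beyond $E$ to separate such witnesses, since a lower bound has to be proved, not hoped for. The witnesses that actually work are elements $c \in [a]\setminus Y$ for classes $[a]$ that \emph{meet} $Y$: then $E(x,b_a)$ with $b_a \in Y\cap[a]$ belongs to $\tp_{\QF}(c/Y)$, and these formulas separate the types as $[a]$ varies. This is why the paper arranges for $E$ to have \emph{exactly} $\lambda$ classes, each of size $\lambda^+$: every class then has a point outside any $Y$ of size $\lambda$, and (see below) every class meets $Y$.

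The second, related gap is your choice of $M$ as ``an elementary submodel of size $\lambda$.'' An arbitrary such $M$ need not meet more than $\aleph_0$ many $E$-classes: for instance, in the theory of an equivalence relation with infinitely many classes, all infinite, which has quantifier elimination, a substructure with only $\aleph_0$ classes (each of size $\lambda$) is an elementary submodel of one with $\lambda^+$ classes. Taking $Y = M$ then defeats the lemma, since the types realized outside $Y$ are separated only by classes represented inside $Y$. The paper avoids this by taking $M$ to be a Skolem hull of a \emph{transversal} of $E$ --- which has size $\lambda$ precisely because $E$ was first cut down to exactly $\lambda$ classes; note that with your $\lambda^+$ many classes, no transversal fits inside a set of size $\lambda$, so your surplus of classes is actually counterproductive. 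Once $M$ contains a transversal, every intermediate $Y \supseteq M$ automatically meets every class, and every class still has points outside $Y$ by cardinality; the ``tension'' you describe between distinguishing classes inside $Y$ and witness classes outside $Y$ disappears, because the same class plays both roles.
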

\begin{proof}
	Consider an expansion $T^+$ of $T$ by finitely many unary predicates, $N^+ \models T^+$, and $E \subset (N^+)^2$ as in Fact \ref{fact:exp}. Fix $\lambda \geq \max(\beth_{\omega+1}(\mu), |T|)$. By possibly passing to an elementary extension, we may assume that $E$ has at least $\lambda$ classes and each $E$-class has size $\lambda^+$. By possibly adding another unary predicate, we may assume $E$ has exactly $\lambda$ classes.
	
	Let $M^+ \prec N^+$ be a Skolem hull of a transversal of $E$, so $|M^+| = \lambda$. Then for any intermediate set $M^+ \subseteq Y \subset N^+$ with $|Y| =\lambda$, both $Y$ and $N^+ - Y$ contain a point from each $E$-class, so $\rtp(N^+, Y) \geq \lambda$.
	
	Finally, we take $M, N$ to be the $L$-reducts of $M^+, N^+$. By Fact \ref{rtpfacts}, $\rtp_\Delta(N, Y) \geq \mu$.
\end{proof}

\begin{remark}
	An alternate proof of Lemma \ref{lemma:unbnd} follows from Theorem~6.1 of \cite{LT1}. One can use the infinitely many infinite arrays given by that theorem to obtain many types, in place of the infinitely many infinite $E$-classes.
\end{remark}

Before the next lemma, we introduce a doubly parameterized family of decompositions, where we vary the size of the sets using $\kappa$ and the strength of the congruence (see Definition \ref{def:decomp}) by $\Delta$, and we may also vary whether the decomposition is into subsets or elementary substructures. Pleasingly, we will see in Theorem~\ref{same} that at the level of theories, admitting essentially any of these decompositions is equivalent to mutual algebraicity.

\begin{definition} \label{def:decomp}
	Given a language $L$, fix a  set $\Delta$ of $L$-formulas and fix a cardinal $\kappa$.
	
	A {\em $\kappa$-partition} of an $L$-structure $N=A\sqcup\bigsqcup\{B_i:i\in I\}$ with $|A|\le\kappa$ and each $|B_i|\le \kappa$.
	
	A $\kappa$-partition induces an equivalence relation $\sim_{\Delta}$ on $(N\setminus A)^{<\omega}$, defined as follows.
	As notation, for $\cbar\in (N-A)^k$, if we write $\cbar=\cc_1;\dots;\cc_n$, then there are distinct $\<i_1,\dots,i_n\>$ from $I$ such that
	each $\cc_\ell\subseteq B_{i_\ell}$.  To ease notation, we write e.g., $\cc_1$ as being an initial segment of $\cbar$, although it need not be.
	
	Given $\cbar,\dbar\in (N-A)^{<\omega}$, we say $\cbar\sim_{\Delta}\dbar$ if and only if there are no repeated elements in either tuple taken individually and we can write $\cbar=\cc_1;\dots;\cc_n$  and $\dbar=\dd_1;\dots;\dd_n$ 
	with $\tp_\Delta(\cc_\ell/A)=\tp_{\Delta}(\dd_\ell/A)$ for every $1\le \ell\le n$.
	
	A $\kappa$-partition $N=A\sqcup\bigsqcup\{B_i:i\in I\}$ is a {\em $\Delta$-congruence over $A$} if,  for all $\cbar,\dbar\in (N-A)^{<\omega}$,
	$\cbar\sim_\Delta \dbar$ implies $\tp_{\Delta}(\cbar/A)=\tp_{\Delta}(\dbar/A)$.

	A {\em $(\kappa,\Delta)$-decomposition of $N$} is a $\kappa$-partition $N=A\sqcup\bigsqcup\{B_i:i\in I\}$  that is a $\Delta$-congruence over $A$,
	and a {\em $(\kappa,\Delta)$-model decomposition of $N$} is a $(\kappa,\Delta)$-decomposition of $N$ in which $A$ and each $A\cup B_i$ are universes of elementary
	substructures of $N$.
	
	For an $L$-theory $T$, we say the pair $(\kappa,\Delta)$ is {\em viable} if $\kappa\ge |T|$ and $\Delta$ is reasonable as in Definition \ref{def:reas}. 
	
	We say an $L$-theory $T$ {\em admits $(\kappa, \Delta)$-decompositions} if every $N\models T$ with $|N|\ge |T|$ has a $(\kappa,\Delta)$-decomposition, and
	$T$ {\em admits $(\kappa, \Delta)$-model decompositions} if every $N\models T$  with $|N|\ge |T|$ has a $(\kappa,\Delta)$-model decomposition.
\end{definition}

\begin{lemma} \label{lemma:ma cong}
	Let $T$ be mutually algebraic and let $M \prec N \models T$ with $|M| \leq |T|$. Let $\set{C_i : i \in I}$ be the partition of $N- M$ into components as in Fact \ref{fact: 4.4}. Then for any reasonable $\Delta$, this partition is a $(|T|, \Delta)$-model decomposition over $M$.
\end{lemma}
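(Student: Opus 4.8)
We have $T$ mutually algebraic, $M \prec N \models T$ with $|M| \le |T|$, and the partition $N - M = \bigsqcup\{C_i : i \in I\}$ from Fact \ref{fact: 4.4}, where each $C_i = \acl(c_i) \setminus M$ and $M \cup C_i \preceq N$, and the $C_i$ form a forking-independent set over $M$. Setting $A = M$ and $B_i = C_i$, we must verify: (a) it's a $\kappa$-partition with $\kappa = |T|$, i.e. $|A| \le |T|$ and each $|B_i| \le |T|$; (b) each $A \cup B_i = M \cup C_i$ is an elementary substructure (given by the Fact); and (c) the crucial $\Delta$-congruence condition.

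**Let me think about the proof structure.**

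For (a): $|M| \le |T|$ is given. For the pieces, $C_i = \acl(c_i) \setminus M$, and since $c_i$ is a single element (or finite tuple), $\acl(c_i)$ has size at most $|T|$ — actually algebraic closure of a finite set has size $\le |T|$ in any theory (it's $\le |L| + \aleph_0 = |T|$). So each $|C_i| \le |T|$. Good.

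For (c), the congruence condition: given $\cbar, \dbar \in (N-M)^{<\omega}$ with $\cbar \sim_\Delta \dbar$, we must show $\tp_\Delta(\cbar/M) = \tp_\Delta(\dbar/M)$. The hypothesis $\cbar \sim_\Delta \dbar$ means we can write $\cbar = \cc_1; \dots; \cc_n$ and $\dbar = \dd_1; \dots; \dd_n$ where each $\cc_\ell$ lives in a single component $C_{i_\ell}$ (distinct indices $i_1, \dots, i_n$), each $\dd_\ell$ lives in a single component $C_{j_\ell}$ (distinct indices $j_1, \dots, j_n$), and $\tp_\Delta(\cc_\ell/M) = \tp_\Delta(\dd_\ell/M)$ for each $\ell$, with no repeated elements within $\cbar$ or within $\dbar$.

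**The key engine: forking independence plus the fact that $M \cup C_i \preceq N$.** The componentwise type equality gives me, for each $\ell$, an $M$-elementary (at the $\Delta$-level) map $\cc_\ell \mapsto \dd_\ell$. I need to amalgamate these into a single map on the whole tuple. The forking-independence of the family $\{C_i\}$ over $M$ is exactly what lets me do this: the type of the whole tuple $\cbar$ over $M$ should be determined by the types of its pieces over $M$, precisely because the pieces are independent.

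**So here is the plan.** First, observe that since $T$ is mutually algebraic, it is in particular stable (mutually algebraic theories are stable — this is part of the standard theory, but I should check whether I'm allowed to invoke it or whether Fact \ref{fact: 4.4} already packages what I need; the statement of Fact \ref{fact: 4.4} explicitly says "forking-independent," so forking and nonforking are meaningful and I have stationarity-type tools available over the model $M$). Over a model, types are stationary, so nonforking extensions are unique. I would then prove the congruence condition by induction on $n$, the number of components. The inductive step is the heart: assuming the claim for $n-1$ components, I take $\cbar = \cc_1; \dots; \cc_n$ and $\dbar = \dd_1; \dots; \dd_n$. By induction, $\tp(\cc_1 \dots \cc_{n-1}/M) = \tp(\dd_1 \dots \dd_{n-1}/M)$ (I will want to bootstrap from $\tp_\Delta$ to full $\tp$, or argue directly at the $\Delta$-level — see the obstacle below). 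Because $C_{i_n}$ is forking-independent from $\bigcup_{\ell < n} C_{i_\ell}$ over $M$, and $\cc_n \mapsto \dd_n$ is an $M$-elementary map with $\dd_n$ in a component independent from the rest, the nonforking amalgamation (uniqueness of the nonforking extension of a stationary type over the model $M$) glues the two partial elementary maps into one. This yields $\tp(\cbar/M) = \tp(\dbar/M)$, hence a fortiori $\tp_\Delta(\cbar/M) = \tp_\Delta(\dbar/M)$.

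**The main obstacle, and how I'd handle it.** The subtlety is that the congruence condition is stated purely at the level of $\tp_\Delta$, but the amalgamation machinery (forking, stationarity) naturally operates at the level of full first-order types $\tp$. Componentwise equality of $\Delta$-types does \emph{not} obviously give componentwise equality of full types, so I cannot naively run the forking argument. I see two routes. The cleaner route exploits that $M \cup C_i \preceq N$: within the substructure $M \cup C_{i_\ell}$, the element $\cc_\ell$ generates via $\acl(c_{i_\ell})$ a highly constrained configuration, and because $C_{i_\ell} = \acl(c_{i_\ell}) \setminus M$, the full type of $\cc_\ell$ over $M$ may already be pinned down by enough of its $\Delta$-type (mutual algebraicity forces definable sets to be "essentially unary/bounded," so quantifier-free or low-complexity data controls everything). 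I would argue that for components of a mutually algebraic theory, $\tp_\Delta(\cc_\ell/M) = \tp_\Delta(\dd_\ell/M)$ already upgrades to $\tp(\cc_\ell/M) = \tp(\dd_\ell/M)$, using $\acl$-genericity and the elementarity $M \cup C_i \preceq N$. The fallback route is to reprove the amalgamation directly at the $\Delta$-level: show that forking-independence of the components lets one glue $\Delta$-elementary maps, using that $\Delta$ is reasonable (closed under Boolean combinations and permutations of variables, contains $\QF$) so that $\Delta$-types behave well enough under concatenation of independent tuples. I expect the genuine work of the proof to be isolating exactly this gluing statement and confirming that the forking-independence from Fact \ref{fact: 4.4}, together with $M \cup C_i \preceq N$ and reasonableness of $\Delta$, suffices to amalgamate the componentwise $\Delta$-type agreements into global $\Delta$-type agreement.
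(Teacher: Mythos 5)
Your verification of the sizes ($|C_i|\le|T|$ since $C_i\subseteq\acl(c_i)$) and of elementarity matches the paper, and you have correctly located the crux: the congruence condition lives at the level of $\Delta$-types, while stationarity and nonforking amalgamation naturally speak about full types. But neither of your two routes closes this gap. Your preferred Route 1 is simply false: componentwise $\Delta$-type equality over $M$ does \emph{not} upgrade to full type equality in mutually algebraic theories, even for $\Delta=\QF$. For a counterexample, let $L=\{R,Q\}$ and let $T$ say that $R$ is a perfect matching and $Q$ is a unary predicate such that infinitely many pairs contain exactly one $Q$-point and infinitely many contain none; this $T$ is mutually algebraic (a bounded-degree graph with a unary predicate). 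Take $M\prec N$, a component $\{c,c'\}$ of $N-M$ with $Q(c')\wedge\neg Q(c)$, and a component $\{d,d'\}$ with $\neg Q(d')\wedge\neg Q(d)$. Then $\tp_{\QF}(c/M)=\tp_{\QF}(d/M)$ (each is a non-$Q$ point with no relations to $M$), but $\tp(c/M)\neq\tp(d/M)$, as $\exists y\,(R(x,y)\wedge Q(y))$ separates them. So no appeal to ``$\acl$-genericity'' or to $M\cup C_i\preceq N$ can establish the upgrade. (Note this does not threaten the lemma itself: with $\Delta=\QF$ the conclusion only asserts equality of $\QF$-types of concatenations, which holds here.)

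Route 2 is the correct direction --- it is what the paper actually does --- but you have not carried it out; you explicitly defer ``isolating exactly this gluing statement,'' and that gluing statement is the entire content of the lemma. The missing engine is a formula-level (local) characterization of nonforking over a model: since mutually algebraic theories are stable, the nonforking extension of $\tp(\cc'_2/M)$ to $M\cc'_1$ agrees with finite satisfiability in $M$ (it is the coheir). This is what lets the paper decide an instance $\phi(\cc'_1\cc'_2,\mbar)$ by reducing it to instances $\phi(\cc'_1\mbar',\mbar)$ with $\mbar'\subset M$ --- and those instances \emph{are} decided by $\tp_\phi(\cc_1/M)$, because Definition~\ref{def:reas} builds all variable partitions (hence all placements of $M$-parameters) into the $\phi$-type. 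The paper then iterates this one component at a time, exactly the induction you envisage, and never passes through full types at any stage; indeed it proves the stronger statement that the partition is a $\phi$-congruence for every single formula $\phi$. Without some such local mechanism (finite satisfiability, or alternatively definability of $\phi$-types over models by boolean combinations of $\phi$-instances), your induction has nothing to run on: the stationarity of full types that you invoke can never be applied, because --- as your own obstacle notes and the example above confirms --- you never obtain equality of full types to feed into it.
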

\begin{proof}
	By Fact~\ref{fact: 4.4}, for each $i$ we have   $M \cup C_i \preceq N$ and $|C_i|\le|T|$ since $C_i\subseteq \acl(c)$ for some singleton.
	So it remains to check that the partition is a $\Delta$-congruence over $M$. In fact, we will show the stronger statement that for any formula $\phi$, the partition is a $\phi$-congruence over $M$. This will follow from the fact that in a stable theory, if a tuple can be partitioned into two independent subtuples over a model $M$, then the $\phi$-type of the tuple over $M$ is determined by the $\phi$-type of the two independent subtuples over $M$. We write the details below.
	
	Fix a formula $\phi(\zbar)$ and tuples $\cbar, \dbar \in (N - M)^{\leq|\zbar|}$ with $\cbar \sim_\phi \dbar$. As in Definition \ref{def:decomp}, let $\cbar = \cc_1;\dots;\cc_n$  and $\dbar=\dd_1;\dots;\dd_n$ 
	with $\tp_\phi(\cc_\ell/M)=\tp_{\phi}(\dd_\ell/M)$ for every $1\le \ell\le n$. Choose a (possibly trivial) partition of $\zbar$ to give $\phi(\xbar;\ybar)$. Choose $\mbar \in M^{|\ybar|}$ and $\cc' \subseteq \cc_1\cc_2$ with $|\cc'| = |\xbar|$, and let $\cc'_1 = \cc' \cap \cc_1, \cc'_2 = \cc' \cap \cc_2$. Since $\cc_1 \ind_M \cc_2$, and forking-independence agrees with finite satisfiability over a model (since mutually algebraic theories are stable), we have $N \models \phi(\cc'_1\cc'_2, \mbar)$ if and only if there exists some $\mbar' \subset M$ such that $N\models \phi(\cc'_1\mbar', \mbar)$. Using analogous notation for $\dbar$, we have that $N \models \phi(\dd'_1\dd'_2, \mbar)$ if and only if there exists some $\mbar' \subset M$ such that $N \models \phi(\dd'_1\mbar', \mbar)$. Since $\tp_\phi(\cc_1/M) = \tp_\phi(\dd_1/M)$, this gives $N \models \phi(\cc'_1\cc'_2, \mbar) \iff N \models \phi(\dd'_1\dd'_2, \mbar)$, so $\tp_\phi(\cc_1\cc_2/M) = \tp_\phi(\dd_1\dd_2/M)$. By continuing inductively, we may show $\tp_\phi((\cc_1\cc_2)\cc_3/M) = \tp_\phi((\dd_1\dd_2)\dd_3/M)$, and eventually that $\tp_\phi(\cc/M) = \tp_\phi(\dd/M)$.
\end{proof}

Our last lemma will be useful when using decompositions to bound the number of realized types.

\begin{lemma} \label{lemma:bound}  Let $T$ be a theory and $(\kappa, \Delta)$ be viable. Let $N \models T$ and let $\{B_i:i\in I\}$ be any $(\kappa, \Delta)$-decomposition of $N$ over $A$.
	For any non-empty $J\subseteq I$ let $B_J=\bigcup_{j\in J} B_j$.
	Then for any $J \subseteq I$, $rtp_\Delta(N, AB_J)\le 2^{\kappa}$.
\end{lemma}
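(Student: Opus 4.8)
The plan is to fix a tuple $\cbar\in(N-AB_J)^{<\omega}$ and show that $\tp_\Delta(\cbar/AB_J)$ is completely determined by the $\sim_\Delta$-class of $\cbar$, and then to bound the number of $\sim_\Delta$-classes by $2^\kappa$. The difficulty is that the $\Delta$-congruence hypothesis only controls $\Delta$-types over the \emph{base} $A$, whereas we want to count types over the larger set $AB_J$. The whole argument turns on the device of absorbing the parameters lying in $B_J$ into the tuple, so that the congruence over $A$ becomes applicable; this absorption is where I expect the main work to lie.

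First I would establish a reduction. Fix $\cbar\in(N-AB_J)^{<\omega}$ and consider a formula $\phi(\xbar,\ybar)\in\Delta$ together with a parameter $\bbar\in(AB_J)^{|\ybar|}$. Split $\bbar$ into the part $\bbar_A$ coming from $A$ and the part $\bbar_J$ coming from $B_J$. Since $\Delta$ is closed under permutation of variables, reordering the variables of $\phi$ yields a formula $\phi'(\xbar\ybar_J;\ybar_A)\in\Delta$ with $N\models\phi(\cbar,\bbar)$ iff $N\models\phi'(\cbar\bbar_J,\bbar_A)$. As $\bbar_J\in(B_J)^{<\omega}$ and $\cbar\in(N-AB_J)^{<\omega}$, the tuple $\cbar\bbar_J$ lies in $(N-A)^{<\omega}$, and the displayed equivalence says that membership of $\phi(\xbar,\bbar)$ in $\tp_\Delta(\cbar/AB_J)$ can be read off from $\tp_\Delta(\cbar\bbar_J/A)$. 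Letting $\bbar$ (equivalently the pair $\bbar_A,\bbar_J$) range over all of $(AB_J)^{<\omega}$ shows that $\tp_\Delta(\cbar/AB_J)$ is determined by the family $\set{\tp_\Delta(\cbar\bbar_J/A):\bbar_J\in(B_J)^{<\omega}}$.

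Next I would use the $\Delta$-congruence to remove the dependence on $\bbar_J$. Suppose $\cbar,\cbar'\in(N-AB_J)^{<\omega}$ are repeat-free with $\cbar\sim_\Delta\cbar'$, witnessed by block decompositions $\cbar=\cc_1;\dots;\cc_m$ and $\cbar'=\cc'_1;\dots;\cc'_m$ with $\tp_\Delta(\cc_\ell/A)=\tp_\Delta(\cc'_\ell/A)$ for each $\ell$. Fix any repeat-free $\bbar_J\in(B_J)^{<\omega}$. Since the blocks of $\cbar$ and of $\cbar'$ lie in the $B_i$ with $i\notin J$ while every block of $\bbar_J$ lies in some $B_j$ with $j\in J$, appending the block decomposition of $\bbar_J$ to those of $\cbar$ and of $\cbar'$ produces genuine block decompositions of $\cbar\bbar_J$ and $\cbar'\bbar_J$ which agree block-by-block. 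Hence $\cbar\bbar_J\sim_\Delta\cbar'\bbar_J$, and the $\Delta$-congruence property over $A$ gives $\tp_\Delta(\cbar\bbar_J/A)=\tp_\Delta(\cbar'\bbar_J/A)$. As $\bbar_J$ was arbitrary, the previous paragraph yields $\tp_\Delta(\cbar/AB_J)=\tp_\Delta(\cbar'/AB_J)$; thus $\tp_\Delta(\cbar/AB_J)$ depends only on the $\sim_\Delta$-class of $\cbar$.

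Finally I would count the classes. Each block of a tuple lies in a single $B_i$, so its $\Delta$-type over $A$ is a complete $\Delta$-type over $A$; since $|\Delta|\le|T|\le\kappa$ and $|A|\le\kappa$, there are at most $\kappa$ pairs $(\phi,\bbar)$ with $\phi\in\Delta$ and $\bbar\in A^{<\omega}$, hence at most $2^\kappa$ complete $\Delta$-types over $A$. A $\sim_\Delta$-class is determined by the finite sequence of $\Delta$-types over $A$ of its blocks, and there are at most $\sum_{m<\omega}(2^\kappa)^m=2^\kappa$ such sequences. Therefore $\tp_\Delta(\cbar/AB_J)$ takes at most $2^\kappa$ values on repeat-free $\cbar$; tuples with repeated coordinates contribute only a further factor of $\aleph_0$, coming from the countably many equality patterns, and so do not change the bound. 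We conclude $\rtp_\Delta(N,AB_J)\le 2^\kappa$. As noted, the one genuinely delicate point is the first reduction: the congruence is stated only for types over $A$, and everything hinges on moving the $B_J$-parameters into the tuple via permutation-closure of $\Delta$ so that the congruence can be invoked.
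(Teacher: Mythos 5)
Your proof is correct and follows essentially the same route as the paper's: the paper also bounds the number of $\sim_\Delta$-classes by $2^\kappa$ and then shows $\cbar\sim_\Delta\dbar$ implies $\tp_\Delta(\cbar/AB_J)=\tp_\Delta(\dbar/AB_J)$ by absorbing $B_J$-tuples into the tuples as extra blocks (using disjointness of $\cbar,\dbar$ from $B_J$) and invoking the congruence over $A$. The only difference is that you spell out the parameter-absorption and counting details that the paper compresses into two sentences.
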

\begin{proof}
	
	As $|A|  \leq \kappa$, 
	there are at most $2^\kappa$ $\sim_\Delta$-classes in $(N-A)^n$ for each $n$. Thus it will suffice to show that $\cbar \sim_\Delta \dbar \Ra \tp_\Delta(\cbar/AB_J) = \tp_\Delta(\dbar/AB_J)$ for every $\cbar, \dbar \subset N \bs AB_J$. From the original congruence condition and the fact that $\cbar, \dbar$ are disjoint from $B_J$, we have $\tp_\Delta(\cbar B_J/A) = \tp_\Delta(\dbar B_J /A)$, and so $\tp_\Delta(\cbar/AB_J) = \tp_\Delta(\dbar/AB_J)$.
\end{proof}

\section{Main results}

In this section, we give some characterizations of mutual algebraicity for a theory. One is in terms of type-counting, while the others concern various types of decomposition.

\begin{theorem}  \label{same}  The following are equivalent for any theory $T$.
	\begin{enumerate}  
		\item  For some viable $(\kappa,\Delta)$, $T$ admits $(\kappa,\Delta)$-decompositions.
		\item  For all viable $(\kappa,\Delta)$, $T$ admits $(\kappa,\Delta)$-decompositions.
		\item  For some viable $(\kappa,\Delta)$, $T$ admits $(\kappa,\Delta)$-model decompositions.
		\item  For all viable $(\kappa,\Delta)$, $T$ admits $(\kappa,\Delta)$-model decompositions.
		\item  $T$ is mutually algebraic.
	\end{enumerate}
\end{theorem}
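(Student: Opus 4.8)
The plan is to prove Theorem~\ref{same} by establishing a cycle of implications, using the three preceding lemmas as the main engine. The structure I would aim for is $(5)\Ra(2)\Ra(1)$ and $(5)\Ra(4)\Ra(3)$, together with $(1)\Ra(5)$ and $(3)\Ra(5)$, so that the two ``some'' conditions and the two ``all'' conditions all feed back into mutual algebraicity. Several of these implications are trivial or near-trivial: $(4)\Ra(3)$ and $(2)\Ra(1)$ are immediate since ``for all'' implies ``for some,'' and $(4)\Ra(2)$ holds because every model decomposition is in particular a decomposition. So the real content lives in two places: deriving the decomposition conditions from mutual algebraicity, and deriving mutual algebraicity from the mere existence of a decomposition.

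\emph{Mutual algebraicity yields decompositions.} For $(5)\Ra(4)$, fix a viable $(\kappa,\Delta)$ and any $N\models T$ with $|N|\ge|T|$. I would first reduce to the case of a small base model: choose $M\prec N$ with $|M|\le|T|\le\kappa$ (an elementary submodel of size $|T|$ exists by Löwenheim--Skolem). Lemma~\ref{lemma:ma cong} then applies directly to the pair $M\prec N$ and the component partition $\set{C_i:i\in I}$ of Fact~\ref{fact: 4.4}, producing a $(|T|,\Delta)$-model decomposition. Since $\kappa\ge|T|$, each $|C_i|\le|T|\le\kappa$ and $|M|\le\kappa$, so this is in particular a $(\kappa,\Delta)$-model decomposition of $N$. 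Thus every model of $T$ of size at least $|T|$ admits the required model decomposition, giving $(4)$, and hence $(2)$ and the ``some'' versions as well.

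\emph{Decompositions yield mutual algebraicity.} This is the contrapositive direction and I expect it to be the main obstacle. Suppose $T$ is not mutually algebraic; I must show $T$ fails $(1)$, i.e.\ there is some model of size at least $|T|$ admitting \emph{no} $(\kappa,\Delta)$-decomposition, for the given viable $(\kappa,\Delta)$. (Since every model decomposition is a decomposition, ruling out $(1)$ also rules out $(3)$.) The tool is Lemma~\ref{lemma:bound}, which says any $(\kappa,\Delta)$-decomposition forces $\rtp_\Delta(N,A)\le 2^\kappa$ (take $J=\emptyset$, or rather bound over the base $A$ itself). Against this I would set Lemma~\ref{lemma:unbnd}: applying it with $\mu=(2^\kappa)^+$ produces models $M\prec N\models T$ with $|M|=\lambda$, $|N|=\lambda^+$ such that every intermediate $Y$ with $M\subseteq Y\subset N$ and $|Y|=\lambda$ satisfies $\rtp_\Delta(N,Y)\ge\mu>2^\kappa$.

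\emph{Matching the cardinalities.} The delicate point is that Lemma~\ref{lemma:bound} bounds $\rtp_\Delta$ over the small base $A$ (size $\le\kappa$), whereas Lemma~\ref{lemma:unbnd} gives a lower bound only for intermediate sets $Y$ of the large size $\lambda$. To reconcile them I would argue as follows: suppose for contradiction that $N$ admits a $(\kappa,\Delta)$-decomposition $N=A\sqcup\bigsqcup\{B_i:i\in I\}$. Since $|N|=\lambda^+$ and each piece has size $\le\kappa<\lambda$, there are $\lambda^+$ pieces, and I can collect $A$ together with a union $B_J$ of exactly $\lambda$-many pieces (choosing them so that $M\subseteq AB_J$, which is possible as $|M|=\lambda$) to form an intermediate set $Y=AB_J$ with $|Y|=\lambda$ and $M\subseteq Y\subset N$. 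Lemma~\ref{lemma:bound} gives $\rtp_\Delta(N,AB_J)\le 2^\kappa$, while Lemma~\ref{lemma:unbnd} gives $\rtp_\Delta(N,Y)\ge\mu>2^\kappa$, a contradiction. Hence $N$ admits no $(\kappa,\Delta)$-decomposition, so $T$ fails $(1)$. Verifying that the pieces can indeed be grouped to cover $M$ while keeping $|Y|=\lambda$ and $Y\ne N$ is the one bookkeeping step requiring care, but it is routine given $|M|=\lambda<\lambda^+=|N|$.
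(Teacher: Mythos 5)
Your proposal is correct and follows essentially the same route as the paper: $(5)\Rightarrow(4)$ via Lemma~\ref{lemma:ma cong}, the trivial implications among $(1)$--$(4)$, and $(1)\Rightarrow(5)$ by playing Lemma~\ref{lemma:unbnd} against Lemma~\ref{lemma:bound} on an intermediate set $AB_J$ of size $\lambda$ covering $M$. The only differences are cosmetic: the paper takes $J$ minimal covering $M$ where you take $|J|=\lambda$ exactly, and it leaves $\mu>2^\kappa$ arbitrary where you fix $\mu=(2^\kappa)^+$; both choices yield the same contradiction.
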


\begin{proof}  It is clear that $(1)-(3)$ follow from (4), and  $(5)\Rightarrow(4)$ is immediate from Lemma \ref{lemma:ma cong}.
	
	We now verify $(1)\Rightarrow(5)$.  By way of contradiction, suppose there is some viable $(\kappa, \Delta)$
	such that $T$ admits $(\kappa,\Delta)$-decompositions, but $T$ is not mutually algebraic.  
	
	Let $\mu > 2^\kappa$, let $M \prec N \models T$ and $\lambda > \mu$ be as in Lemma \ref{lemma:unbnd}, and let $A \sqcup \bigsqcup \set{B_i : i \in I}$ be a $(\kappa, \Delta)$-decomposition of $N$. Let $J \subset I$ be minimal such that $AB_J$, in the notation of Lemma \ref{lemma:bound}, covers $M$. Then $M \subseteq AB_J$ and $|AB_J| = \lambda$, so $\rtp_\Delta(N, AB_J) \geq \mu > 2^\kappa$ by Lemma \ref{lemma:unbnd}. But this contradicts Lemma \ref{lemma:bound}.
\end{proof}

In proving $(1) \Ra (5)$ in Theorem \ref{same}, there is a tension between taking $\Delta = \text{QF}$ and $\Delta = \text{FO}$. On the one hand, our non-structure result for non-mutually algebraic theories yields an FO-definable equivalence relation in a unary expansion. However, although it is easy that taking a unary expansion preserves admitting $(\kappa, \text{QF})$-congruences, this is not clear for $(\kappa, \text{FO})$-congruences, which prevents pulling the non-structure back to the original theory. By instead passing through type-counting, Lemma \ref{lemma:bound} allows us to relate $\Delta = \text{QF}$ and $\Delta = \text{FO}$. We now also characterize mutual algebraicity in terms of this sort of type counting.

\begin{definition}  Call a (possibly incomplete) theory $T$ {\em bounded} if there is some cardinal $\kappa$ such that for any $M \models T$ (of any size), there are at most $\kappa$ coordinate-wise non-algebraic types over $M$. Equivalently, $\rtp(N,M)\le \kappa$ for all
	$M\preceq N\models T$.
\end{definition}

The notion of a theory being bounded was investigated in \cite[Corollary 6.1.8]{BS}, which proves that $T$ is bounded if and only if it is strongly decomposable (i.e. admits ($|T|$, QF)-model decompositions). 

\begin{theorem} \label{thm:bnd}  
	A theory $T$ is mutually algebraic if and only if $T$ is bounded. Furthermore, if $T$ is bounded then it is bounded by $2^{|T|}$, and if $T$ is not bounded then it is not bounded even for quantifier-free types.
\end{theorem}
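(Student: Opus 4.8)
The plan is to prove the biconditional as two implications, arranging matters so that the forward direction yields the sharp constant $2^{|T|}$ and the reverse direction yields the ``even for quantifier-free types'' refinement. The forward direction (mutually algebraic $\Rightarrow$ bounded by $2^{|T|}$) is the substantial one. Fix $M \preceq N \models T$; I must show $\rtp(N,M) = \rtp_{\Fm}(N,M) \le 2^{|T|}$. The difficulty is that Lemmas \ref{lemma:ma cong} and \ref{lemma:bound} are calibrated to a base of size $\le |T|$, whereas $M$ may be arbitrarily large, so one must resist decomposing $N$ directly over $M$ (where the number of $\sim_\Delta$-classes is no longer controlled by $2^{|T|}$). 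Instead I would first choose, by L\"owenheim--Skolem, an elementary submodel $M_0 \preceq M$ with $|M_0| \le |T|$ (taking $M_0 = M$ if $|M| \le |T|$), so that $M_0 \preceq N$ by transitivity.

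Applying Fact \ref{fact: 4.4} and Lemma \ref{lemma:ma cong} to $M_0 \preceq N$ yields a partition of $N - M_0$ into components $\set{C_j : j \in I}$ that is a $(|T|,\Fm)$-model decomposition over $M_0$. The key observation is that $M - M_0$ is a union of these components: for $d \in M - M_0$, its component is $\acl(d)\setminus M_0$, and since $M \preceq N$ is a model we have $\acl(d) \subseteq M$, so the entire component lies in $M - M_0$. Setting $J = \set{j : C_j \subseteq M - M_0}$ therefore gives $M_0 \cup \bigcup_{j \in J} C_j = M$, whence Lemma \ref{lemma:bound}, applied to this decomposition with this $J$, delivers $\rtp(N,M) \le 2^{|T|}$ directly. (When $M = M_0$ the index set $J$ is empty and the same bound is precisely the congruence bound established in the proof of Lemma \ref{lemma:bound}.) Since this holds for every $M \preceq N \models T$, the theory is bounded by $2^{|T|}$.

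For the converse refinement (not mutually algebraic $\Rightarrow$ not bounded, even for quantifier-free types), I would invoke Lemma \ref{lemma:unbnd} with $\Delta = \QF$. Given any candidate bound $\mu$, that lemma produces $M \prec N \models T$ with $|M| = \lambda$ such that every intermediate $Y$ of size $\lambda$ satisfies $\rtp_{\QF}(N,Y) \ge \mu$; taking $Y = M$ (legitimate since $|N| = \lambda^+ > \lambda$) exhibits a single pair $M \preceq N$ with $\rtp_{\QF}(N,M) \ge \mu$. As $\mu$ was arbitrary, no uniform bound on $\rtp_{\QF}$ can exist, so $T$ is not bounded even for quantifier-free types, and by Fact \ref{rtpfacts}(1) it is a fortiori not bounded. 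The two implications together give the biconditional, while the forward direction supplies the bound $2^{|T|}$ and the converse supplies the quantifier-free clause.

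I expect the main obstacle to be exactly the passage from small bases to arbitrarily large models $M$ in the forward direction; the reduction to a small $M_0$, combined with the fact that a model $M$ is, modulo $M_0$, itself a union of $\acl$-components of $N$ over $M_0$, is what makes Lemma \ref{lemma:bound} applicable and produces the explicit constant $2^{|T|}$ rather than the weaker $\beth_{\omega+1}$-type bound one would obtain by routing $\Fm$-types through $\QF$-types via Fact \ref{rtpfacts}(2). (As a sanity check, the qualitative equivalence ``mutually algebraic $\iff$ bounded'' can also be read off by combining \cite[Corollary 6.1.8]{BS} with Theorem \ref{same}, but that route does not obviously yield the sharp bound.)
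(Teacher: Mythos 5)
Your proposal is correct and follows essentially the same route as the paper's proof: reduce to a small $M_0 \preceq M$ of size $\le |T|$, decompose $N$ over $M_0$ via Fact \ref{fact: 4.4} and Lemma \ref{lemma:ma cong}, observe that $M$ (being algebraically closed in $N$) is $M_0$ together with a union of components so that Lemma \ref{lemma:bound} yields the bound $2^{|T|}$, and handle the converse by applying Lemma \ref{lemma:unbnd} with $\Delta = \QF$ and $Y = M$. The only difference is that you spell out details the paper leaves implicit (why $M - M_0$ is a union of components, and the empty-$J$ edge case), which does not change the argument.
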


\begin{proof}  First, assume $T$ is mutually algebraic and let $M \preceq N \models T$. Let $M_0 \preceq M$ with $|M_0| \leq |T|$, and consider the partition of $N$ over $M_0$ into components $\set{C_i : i \in I}$ as in Fact \ref{fact: 4.4}. By Lemma \ref{lemma:ma cong}, this is a $(|T|, \text{FO})$-decomposition of $N$ over $M_0$. Since $M$ is algebraically closed, we have $M = M_0C_J$, in the notation of Lemma \ref{lemma:bound},  for some $J \subset I$. Thus by Lemma \ref{lemma:bound}, $\rtp(N, M) \leq 2^{|T|}$.
	
	Conversely, if $T$ is not mutually algebraic, the statement holds by Lemma \ref{lemma:unbnd}.
\end{proof}

\begin{remark}  \leavevmode
		\begin{enumerate}
			\item  In the definition of boundedness, it is crucial that the base be restricted to elementary submodels of $N$.
			As an example, take $L=\{R\}$ and let $N$ be an infinite model of `mated pairs,' i.e., $R$ is symmetric, irreflexive, and
			every element of $N$ is $R$-related to exactly one element.  Then $Th(N)$ is mutually algebraic and totally categorical.  But, for any infinite cardinal $\lambda$,
			taking $N$ to be the model of size $\lambda$ and $B$ to be a set of $R$-representatives, we have $\rtp(N,B)=\rtp_\QF(N,B)=\lambda$.
			
			\item  The bound of $2^{|T|}$ in Theorem~\ref{thm:bnd} is sharp, as witnessed by the theory $T$ of $\kappa$ independent unary predicates.
			Then $|T|=\kappa$ and is mutually algebraic.  However, if $N\models T$ realizes all of the $2^\kappa$ types over $\emptyset$ and if $M\preceq N$ is any elementary
			substructure of size $<2^\kappa$, then $\rtp(N,M)=\rtp_{\QF}(N,M)=2^\kappa$.
			\item Theorem \ref{thm:bnd} is similar to the main result of \cite{LT1}, which, for a finite relational language, characterizes mutual algebraicity by there being only finitely many quantifier-free coordinate-wise non-algebraic $n$-types for each $n$, over every model.
		\end{enumerate}
\end{remark}

We close with a question.  Even though many notions of decompositions mentioned in Theorem \ref{same} are all equivalent to mutual algebraicity at the level of theories, requiring that the base set $A=\emptyset$ 
is more restrictive.  
That is, define an $\emptyset$-$(\kappa,\Delta)$-decomposition of $N$ to be a $(\kappa,\Delta)$-decomposition of $N$ in which $A=\emptyset$.  As an easy example, take $L=\{E\}$
and let $T$ be the complete $L$-theory asserting that $E$ is an equivalence relation with two classes, both infinite.  Then $T$ is mutually algebraic, but if $N$ is the saturated model of
size $\aleph_1$, then $N$ does not have an $\emptyset$-$(\aleph_0,\QF)$-decomposition since there is only one 1-type over the empty set.
It would be desirable to characterize those (mutually algebraic) theories that admit $\emptyset$-$(\kappa,\Delta)$-decompositions.

%    Bibliographies can be prepared with BibTeX using amsplain,
%    amsalpha, or (for "historical" overviews) natbib style.
\bibliographystyle{amsplain}
%    Insert the bibliography data here.
\bibliography{Bib.bib}

\end{document}